\documentclass[11pt,reqno]{amsart}
\usepackage{amssymb,latexsym}
\usepackage{fancyhdr,amssymb}
\usepackage{hyperref}
\usepackage{listings}
\usepackage{amsmath}
\usepackage{hyperref}
\usepackage[utf8]{inputenc}
\usepackage{enumerate}
\usepackage{ucs}
\usepackage{lmodern}
\usepackage[T1]{fontenc}
\usepackage{mathrsfs}

\usepackage[letterpaper, top = 4cm, bottom = 4cm,right = 4cm, left = 4cm]{geometry}
\theoremstyle{theorem}

\usepackage{comment}
\newtheorem{theorem}{Theorem}[section]
\newtheorem{lemma}[theorem]{Lemma}
\newtheorem{proposition}[theorem]{Proposition}
\newtheorem{cor}[theorem]{Corollary}

\theoremstyle{definition}
\newtheorem{definition}[theorem]{Definition}

\theoremstyle{remark}
\newtheorem{remark}{Remark}

\theoremstyle{example}

\title{Progress towards a nonintegrality conjecture}
\author[S. Laishram, D. L\'opez-Aguayo, C. Pomerance, and T. Thongjunthug]{Shanta Laishram, Daniel L\'{o}pez-Aguayo, Carl Pomerance and Thotsaphon Thongjunthug}

\address{Stat-Math Unit, India Statistical Institute, 7, S.J.S Sansanwal Marg, New Delhi, 110016, India.}
\email{shanta@isid.ac.in}
\address{Tecnologico de Monterrey, Escuela de Ingenier\'{i}a y Ciencias, Monterrey, Nuevo Le\'{o}n, M\'{e}xico.}
\email{dlopez.aguayo@tec.mx}
\address{Department of Mathematics, Dartmouth College, Hanover, NH 03755, USA.}
\email{carlp@math.dartmouth.edu}
\address{Department of Mathematics, Faculty of Science, Khon Kaen University, Khon Kaen 40002, Thailand.}
\email{thotho@kku.ac.th}

\begin{document}
\maketitle

\begin{abstract} 
Given $r \in \mathbb{N}$, define the function $S_{r}: \mathbb{N} \rightarrow \mathbb{Q}$ by
\begin{center}
$S_{r}(n)=\displaystyle \sum_{k=0}^{n} \frac{k}{k+r} \binom{n}{k}$.
\end{center}
In $2015$, the second author conjectured that there are infinitely many $r \in \mathbb{N}$ such that $S_{r}(n)$ is nonintegral for all $n \geq 1$, and proved that $S_{r}(n)$ is not an integer for $r \in \{2,3,4\}$ and for all $n \geq 1$. In $2016$, Florian Luca and the second author raised the stronger conjecture that for any $r \geq 1$, $S_{r}(n)$ is nonintegral for all $n \geq 1$. They proved that $S_{r}(n)$ is nonintegral for $r \in \{5,6\}$ and that $S_{r}(n)$ is not an integer for any $r  \geq 2$ and $1 \leq n \leq r-1$. In particular, for all $r \geq 2$, $S_{r}(n)$ is nonintegral for at least $r-1$ values of $n$. In $2018$, the fourth author gave sufficient conditions for the nonintegrality of $S_{r}(n)$ for all $n \geq 1$, and derived an algorithm to sometimes determine such nonintegrality; along the way he proved that $S_{r}(n)$ is nonintegral for $r \in \{7,8,9,10\}$ and for all $n \geq 1$. By improving this algorithm we prove the conjecture for $r\le 22$. Our principal result is that $S_r(n)$ is usually nonintegral in that  the upper asymptotic density of the set of integers $n$ with $S_r(n)$ integral decays faster than
any fixed power of $r^{-1}$ as $r$ grows.  
\end{abstract}

\section{Introduction}
In 2014, Marcel Chiri\unichar{539}\unichar{259} \cite{3} asked to show that $\displaystyle \sum_{k=0}^{n} \frac{k}{k+1} \binom{n}{k}$ is nonintegral for all integers $n \geq 1$. This is true and one can prove it as follows: the given sum is equal to $2^{n}-\frac{2^{n+1}-1}{n+1}$ and $\frac{2^{n+1}-1}{n+1}$ is never an integer due to the fact that for every integer $r \geq 2$, $2^{r} \not \equiv 1 \pmod{r}$. Given $r \in \mathbb{N}$, define the function $S_{r}: \mathbb{N} \rightarrow \mathbb{Q}$ by $S_{r}(n)=\displaystyle \sum_{k=0}^{n} \frac{k}{k+r} \binom{n}{k}$. Motivated by \cite{3}, the second author \cite{7} raised the question whether there are infinitely many $r \in \mathbb{N}$ such that $S_{r}(n)$ is nonintegral for all $n \geq 1$, and proved that $S_{r}(n)$ is not an integer for $r \in \{2,3,4\}$ and for all $n \geq 1$. These results also hinge on the fact that for $r \geq 2$, $2^{r} \not \equiv 1 \pmod{r}$.

In 2016, Florian Luca and the second author \cite{8} conjectured that for any $r \geq 2$, $S_r(n)$ is always nonintegral for all $n \geq 1$.
They proved that $S_{r}(n)$ is nonintegral for $r \in \{5,6\}$ and that $S_{r}(n)$ is not an integer for any $r  \geq 2$ and $1 \leq n \leq r-1$. In particular, for all $r \geq 2$, $S_{r}(n)$ is nonintegral for at least $r-1$ values of $n$. The proof of this fact relies heavily on the following theorem of Sylvester \cite{4}: every product of $k$ consecutive integers larger than $k$ is divisible by a prime larger than $k$.  In $2018$, the fourth author \cite{10} gave sufficient conditions for the nonintegrality of $S_{r}(n)$ for all $n \geq 1$, and derived an algorithm to sometimes determine such nonintegrality; along the way he proved that $S_{r}(n)$ is nonintegral for $r \in \{7,8,9,10\}$ and for all $n \geq 1$. 

Our first result is a simplified version of \cite[Theorem 4.1]{10}.
\begin{theorem}
\label{thm:tt}
Given an integer $r\ge2$, the sum $S_{r}(n)$ is nonintegral for all $n\in\mathbb N$ if the following condition holds.
With $m_r$ the product of all primes up to $r$,
each integer $n \in \{1,\ldots,m_r\}$ satisfies at least one of the following: 
\begin{enumerate}
\item[\rm(1)] There exists $i \in \{1, \ldots, r\}$ such that $\gcd(n+i,m_r)=1$. 
\item[\rm(2)] There exist $i,j \in \{1,\ldots,r\}$ such that $\gcd(n+i,m_r)=\gcd(n+j,m_r)=2$ and
$0<|i-j|<8$.
\end{enumerate}
\end{theorem}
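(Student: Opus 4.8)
The plan is to first convert $S_r(n)$ into a form in which the denominators are visible. Writing $\tfrac{k}{k+r}=1-\tfrac{r}{k+r}$ gives $S_r(n)=2^n-r\sum_{k=0}^n\binom nk\tfrac1{k+r}$, and the elementary beta-type identity $\sum_{k=0}^n\binom nk\tfrac1{k+r}=\int_0^1x^{r-1}(1+x)^n\,dx$ followed by the substitution $u=1+x$ yields
\[
S_r(n)=2^n-r\sum_{m=n+1}^{n+r}(-1)^{n+r-m}\binom{r-1}{m-n-1}\,\frac{2^m-1}{m}.
\]
Since $2^n\in\mathbb Z$, the nonintegrality of $S_r(n)$ is equivalent to the existence of a prime $p$ with $v_p$ of the displayed sum negative, where the only denominators are the $r$ consecutive integers $n+1,\dots,n+r$. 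I would also record at the outset that for every $i$ the quantity $\gcd(n+i,m_r)$ depends only on $n\bmod m_r$; hence conditions (1) and (2) are invariant under $n\mapsto n+m_r$, and it suffices to prove the implication ``(1) or (2) $\Rightarrow S_r(n)\notin\mathbb Z$'' for a single $n$, the reduction to $\{1,\dots,m_r\}$ being automatic.

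For a residue satisfying (1), fix $i$ with $N:=n+i$ coprime to $m_r$, so every prime factor of $N>1$ exceeds $r$. The classical fact that $N\nmid 2^N-1$ for $N>1$ (take the least prime factor $p$ of $N$ and note $\operatorname{ord}_p(2)\mid\gcd(N,p-1)=1$) produces a prime $p\mid N$ with $v_p(2^N-1)<v_p(N)$. Because $p>r$, this $p$ divides exactly one of the $r$ consecutive denominators, namely $m=N$, and it divides neither $r$ nor $\binom{r-1}{m-n-1}$; consequently $v_p$ of the whole sum equals $v_p(2^N-1)-v_p(N)<0$, with no possibility of cancellation from the other terms. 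This settles case (1).

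Case (2) is the heart of the matter. Here condition (1) fails, so every $n+i$ carries a prime factor $\le r$, and I must extract nonintegrality from the two indices $i,j$ with $\gcd(n+i,m_r)=\gcd(n+j,m_r)=2$ and $0<|i-j|<8$. Writing $n+i=2^{s}u$ and $n+j=2^{s'}u'$ with $u,u'$ coprime to $m_r$, the crucial elementary observation is that two even numbers differing by less than $8=2^3$ cannot both be divisible by $8$; hence $\min(s,s')\le2$. For the index with $v_2\le2$ and nontrivial odd part, the least prime $p\mid u$ satisfies $\gcd(u,p-1)=1$, so $\operatorname{ord}_p(2)\mid\gcd(2^{s},p-1)\mid4$, whence $p\in\{3,5\}$; since $p>r$ this is impossible once $r\ge5$, again giving a prime dividing a unique denominator with negative valuation. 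The residual configurations---small $r$ (where $p=5$ with $\operatorname{ord}_5(2)=4$ can occur), and the degenerate cases where an odd part is $1$---are where the second index and the prime $p=2$ must be used directly: one computes $v_2$ of the displayed sum via Kummer's theorem for the factor $\binom{r-1}{m-n-1}$ together with a lifting-the-exponent evaluation of $v_p(2^m-1)$, and shows the minimal valuation is attained without cancellation precisely because the two gcd-$2$ indices are forced close together. I expect this $2$-adic bookkeeping, and the verification that the constant $8$ is calibrated so that every residual case closes, to be the main obstacle; case (1) and the ``generic'' part of case (2) are comparatively routine.
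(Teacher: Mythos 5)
Your identity for $S_r(n)$ is precisely Lemma \ref{lem29} of the paper (derived there from \cite{8}, here via the beta integral), and your handling of case (1) and of the generic part of case (2) is in substance the paper's own proof: the paper packages your ``least prime factor $p$ of the part of $n+j$ coprime to $m_r$, with $\operatorname{ord}_p2\mid\gcd(2^s,p-1)$'' mechanism as Proposition \ref{prop:suff} (write $n+j=ab$ with $a\in\{1,2,4\}$ and $b$ free of primes $\le r$, and use that every prime factor of $2^a-1$ is at most $r$), together with the same observation that two even numbers less than $8$ apart cannot both be divisible by $8$. All of that is correct.

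The genuine gap is the block of ``residual configurations'' you defer to a $2$-adic computation, and there are two problems with it. First, the leftover set is much smaller than you suggest: for $r\ge5$ the primes $3$ and $5$ divide $m_r$, so $\gcd(n+j,m_r)=2$ already forces every prime factor of the odd part $u$ to exceed $5$ (this is exactly your own remark that $p\in\{3,5\}$ is ``impossible once $r\ge5$''), and the degenerate case $u=1$ forces $n+j=2^s\le4$, hence $n\le3$. The paper closes these holes with no valuation computation at all: by \cite{8}, $S_r(n)$ is nonintegral for $1\le n\le r-1$, so one may assume $n\ge r$, which gives $n+j>4$ and hence $u>1$; and the cases $r\le6$ are known unconditionally from \cite{7,8}, so one may assume $r\ge7$. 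Second, the tool you propose for the leftovers is the wrong one: computing $v_2$ of the displayed sum is genuinely delicate, since every even $m\in\{n+1,\dots,n+r\}$ contributes a negative power of $2$, the numerators $2^m-1$ are all odd, and nothing in hypothesis (2) visibly prevents cancellation of the minimal $2$-adic valuation among several terms; I do not believe this sketch closes as written. Replace that paragraph by the reduction to $n\ge r$ and $r\ge7$ (or a direct evaluation of $S_r(n)$ for $n\le3$) and your argument becomes complete and coincides with the paper's.
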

We discuss the more complicated version of this result from \cite{10} in Section \ref{sec:tt}.
With our simpler criterion, plus some other ideas presented below, we are able to
prove the following result.
\begin{theorem}\label{thm:r16}
For $1\le r\le 22$ we have $S_r(n)$ nonintegral for all $n\in\mathbb{N}$.
\end{theorem}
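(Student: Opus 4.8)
The plan is to deduce Theorem \ref{thm:r16} from the sufficient condition in Theorem \ref{thm:tt} by a finite verification, one value of $r$ at a time. The case $r=1$ is the motivating example of \cite{3} already settled in the introduction, so it suffices to treat $2\le r\le 22$. For each such $r$ I would compute the primorial $m_r=\prod_{p\le r}p$ and check that every $n\in\{1,\dots,m_r\}$ satisfies (1) or (2). The crucial structural point is that both conditions depend on $n$ only through the quantities $\gcd(n+i,m_r)$, which are periodic in $n$ with period $m_r$; hence verifying them on the single period $\{1,\dots,m_r\}$ establishes them for all $n\in\mathbb N$, which is exactly the hypothesis that Theorem \ref{thm:tt} demands.

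To carry this out efficiently I would first sieve, over residues modulo $m_r$, two $0/1$-arrays: one marking $t$ with $\gcd(t,m_r)=1$, and one marking $t$ with $\gcd(t,m_r)=2$ (equivalently, $t$ even with $t/2$ divisible by no odd prime $\le r$). Condition (1) at $n$ asks whether the window $[n+1,n+r]$ meets the first array, while condition (2) asks whether it contains two marks of the second array at distance less than $8$. Rather than an inner double loop over $i,j$, I would turn each witness into an interval of $n$-values it certifies: a coprime residue $t$ certifies (1) for $t-r\le n\le t-1$, and a close pair $a<a+d$ (with $d\in\{2,4,6\}$) of $\gcd=2$ residues certifies (2) for $a+d-r\le n\le a-1$. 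Accumulating these with a difference array reduces the total work to $O(m_r)$, so even the largest modulus in range, $m_{19}=m_{20}=m_{21}=m_{22}=9\,699\,690$, is entirely manageable. As a consistency check one notes that condition (1) alone covers all residues precisely when Jacobsthal's function satisfies $g(m_r)\le r$, which among $2\le r\le22$ holds exactly for $r\in\{2,4,6,10\}$; for every other $r$ the burden genuinely falls on condition (2).

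The conceptual hard part is that Theorem \ref{thm:tt} provides only a \emph{sufficient} condition, so a priori there may be a residue $n$ at which both (1) and (2) fail, and for such $n$ the theorem is silent. Concretely, (1) fails exactly when the length-$r$ window lies inside a long run of integers each divisible by some prime $\le r$ (a large coprime gap), and inside such a run condition (2) can fail when the even entries coprime to the odd part of $m_r$ are too sparse or too widely spaced. I expect these exceptional residues to surface for the larger $r$ in the range, where $g(m_r)$ substantially exceeds $r$, and resolving them is where the ``other ideas'' enter. For each such residue I would fall back either on the finer, unsimplified criterion of \cite{10} discussed in Section \ref{sec:tt}, or on a direct argument: writing $\frac{k}{k+r}=1-\frac{r}{k+r}$ gives $S_r(n)=2^n-rT_r(n)$ with $T_r(n)=\sum_{k=0}^{n}\binom{n}{k}/(k+r)$, so $S_r(n)$ is nonintegral as soon as some prime $p$ forces $v_p\bigl(rT_r(n)\bigr)<0$. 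Assembling the bulk computation via Theorem \ref{thm:tt} with a case-by-case rescue of the finitely many residues it misses then completes the proof for all $r\le22$.
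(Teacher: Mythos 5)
Your overall strategy --- verify conditions (1) and (2) of Theorem \ref{thm:tt} over a single period modulo $m_r$ --- is exactly the paper's starting point, and it does carry the proof through $r\le 16$. You also correctly predict that this breaks down for larger $r$: the paper exhibits $n=60462$ for $r=17$, where every member of $\{n+1,\dots,n+17\}$ shares a factor with $m_{17}=510510$ and the only members free of odd prime factors at most $17$ are $n+2$ and $n+10$, so (1) and (2) both fail. The gap is in your rescue plan for these exceptional residues. Your first fallback, the finer unsimplified criterion of \cite{10}, cannot help: Section \ref{sec:tt} shows that its condition (a) holds identically, so that criterion reduces to exactly conditions (1) and (2) again. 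Your second fallback, finding a prime $p$ whose power in the denominator of $rT_r(n)$ is positive, restates the goal rather than supplying a method; you would still need to say which prime and why it survives cancellation in the full sum.

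The missing ingredient is Proposition \ref{prop:suff}: if some $n+j$ with $1\le j\le r$ factors as $ab$ with every prime of $b$ exceeding $r$ and every prime of $2^a-1$ at most $r$, then $S(r,n)$ is nonintegral. Conditions (1) and (2) are just the cases $a=1$ and $a\in\{2,4\}$; the paper handles $17\le r\le22$ by admitting larger $a$. Concretely, condition (2') allows two even witnesses at distance less than $16$ because at least one of them is then not divisible by $16$, so $a\in\{2,4,8\}$ suffices once $r\ge17$ (as $2^8-1=255=3\cdot5\cdot17$); and condition (3) uses witnesses divisible by $3$ but not by $9$ or by $8$, so $a\mid12$ suffices for $r\ge13$ (as $2^{12}-1=4095=3^2\cdot5\cdot7\cdot13$). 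A further subtlety your periodicity argument misses: applying Proposition \ref{prop:suff} with $a=2$ or $4$ requires $8\nmid n+j$, which is periodic modulo $4m_r$ rather than $m_r$, and this is why the paper examines each exceptional interval translated by $jm_r$ for $j=0,1,2,3$. Without these additional sufficient conditions the exceptional residues at $r=17,\dots,22$ remain unresolved, so the proposal as written does not complete the proof.
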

We also prove the following result.
\begin{theorem}
\label{thm:dense}
For each $k>0$ there exists a constant $c_{k}>0$, such that for each integer $r>1$, the upper asymptotic density of $\{n\in\mathbb{N}:S_r(n)\in\mathbb{N}\}$ is at most $c_{k}/r^{k}$.
\end{theorem}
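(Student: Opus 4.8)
The plan is to localize the denominators of $S_r(n)$ to a window of length $r$ and then to show that, for all but a sparse set of $n$, that window meets the integers coprime to $m_r$, which already forces nonintegrality. Writing $\frac{k}{k+r}=1-\frac{r}{k+r}$ and using $\frac{1}{k+r}=\int_0^1 x^{k+r-1}\,dx$, I first record the identity
\[
S_r(n)=2^n-r\sum_{k=0}^n\frac{1}{k+r}\binom nk=2^n-r\int_0^1 x^{r-1}(1+x)^n\,dx .
\]
Substituting $u=1+x$ and expanding $(u-1)^{r-1}$ gives the closed form
\[
S_r(n)=2^n-r\sum_{m=n+1}^{n+r}(-1)^{\,n+r-m}\binom{r-1}{m-n-1}\,\frac{2^m-1}{m},
\]
so that the only denominators that can appear lie in the window $W_n=\{n+1,\dots,n+r\}$.

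Next I would isolate a clean sufficient condition for nonintegrality. Suppose some $m\in W_n$ with $m\ge 2$ is coprime to $m_r$. By the elementary fact used throughout the introduction, $2^m\not\equiv 1\pmod m$, so there is a prime $p\mid m$ with $v_p(2^m-1)<v_p(m)$, where $v_p$ denotes the $p$-adic valuation. Since $m$ is coprime to $m_r$ we have $p>r$; hence $p$ divides a unique element of the length-$r$ window $W_n$, namely $m$, and $p\nmid r$ and $p\nmid\binom{r-1}{m-n-1}$. Thus in the closed form above the term indexed by $m$ has valuation $v_p(2^m-1)-v_p(m)<0$, while $2^n$ and every other term have nonnegative $p$-adic valuation; therefore $v_p(S_r(n))<0$ and $S_r(n)\notin\mathbb Z$. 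Consequently, apart from finitely many small $n$, $S_r(n)\in\mathbb N$ forces $W_n$ to contain no integer coprime to $m_r$.

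It then remains to bound the density of such $n$. The event ``$W_n$ contains no integer coprime to $m_r$'' is periodic in $n$ modulo $m_r$, so the upper density of $\{n:S_r(n)\in\mathbb N\}$ is at most $\mathbb P(U=0)$, where $U=U(n)=\#\{i\in\{1,\dots,r\}:\gcd(n+i,m_r)=1\}$ and $n$ is taken uniformly modulo $m_r$. By Mertens' theorem the mean satisfies $\mu:=\mathbb E[U]=r\prod_{p\le r}\bigl(1-\tfrac1p\bigr)\sim e^{-\gamma}r/\log r\to\infty$. I would prove that $U$ concentrates about $\mu$ by bounding its even central moments: for each fixed $j$,
\[
\mathbb E\bigl[(U-\mu)^{2j}\bigr]\ll_j \mu^{\,j}(\log r)^{O_j(1)} .
\]
Granting this, Markov's inequality yields $\mathbb P(U=0)\le\mathbb P(|U-\mu|\ge\mu)\le \mu^{-2j}\,\mathbb E[(U-\mu)^{2j}]\ll_j \mu^{-j}(\log r)^{O_j(1)}$, and choosing $j=k+1$ makes this $\ll_k r^{-k}$, as desired.

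The main obstacle is the central-moment bound. Expanding $U$ as a sum of the indicators $X_i=\mathbf 1[\gcd(n+i,m_r)=1]$, the relevant quantities are the factorial moments $\mathbb E\binom{U}{s}=\sum_{S}\prod_{p\le r}\bigl(1-\nu_p(S)/p\bigr)$, where $S$ ranges over $s$-subsets of $\{1,\dots,r\}$ and $\nu_p(S)$ counts the residues occupied by $S$ modulo $p$. These exceed the naive value $\binom rs\,\delta_r^{\,s}$, with $\delta_r=\prod_{p\le r}(1-1/p)$, because of congruence coincidences among the offsets, and the difficulty is to show that, after centering, the top-order contributions cancel so that only a term of size $\mu^{\,j}$ survives. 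This cancellation is essentially the statement that the associated singular series averages to $1$; making it quantitative at every even order $2j$, with acceptable error terms coming from truncating the sieve at $r$, is where the analysis is hardest and where I expect the fundamental lemma of sieve theory to be brought to bear.
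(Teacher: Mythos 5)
Your reduction of the problem is correct and coincides with the paper's: the closed form you obtain from the integral representation is exactly Lemma \ref{lem29} combined with \eqref{eq:2pow}, and your nonintegrality criterion (a prime $p>r$ with $v_p(2^m-1)<v_p(m)$ for some $m$ in the window coprime to $m_r$, hence dividing no other term's denominator and no binomial coefficient) is precisely the case $a=1$, $b=n+i$ of Proposition \ref{prop:suff}, which is the only case the paper needs for the density statement. The periodicity mod $m_r$ and the reduction to bounding the proportion of residues $n$ for which $\{n+1,\dots,n+r\}$ misses the reduced residues are also fine.

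The gap is the central-moment estimate $\mathbb{E}\bigl[(U-\mu)^{2j}\bigr]\ll_j \mu^{j}(\log r)^{O_j(1)}$, which you state, use under the heading ``granting this,'' and then candidly identify as the hardest step without supplying a proof. This is not a routine computation: up to normalization it is the main theorem of Montgomery and Vaughan \cite{9} on the moments of the count of reduced residues in short intervals, and the cancellation in the averaged singular series that you flag is exactly the delicate content of that Annals paper. In particular, the fundamental lemma of sieve theory will not deliver it: upper- and lower-bound sieves lose multiplicative constants at each order, which is fatal when the whole point is that the top-order terms of the factorial moments must cancel to accuracy $\mu^{j}$ out of a main term of size $\mu^{2j}$. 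So as written the proposal assumes its key analytic input. The repair is to cite \cite{9} rather than reprove it: either their moment theorem directly, or, as the paper does, their Corollary 1 (Proposition \ref{mont} here), which bounds the gap moments $V_\alpha(q)\le c(\alpha)\phi(q)(\phi(q)/q)^{-\alpha}$. From that, a window with no reduced residue must lie inside a gap of length $>r$, and a gap of length $g$ contains at most $g-r\le g$ such windows, so the number of bad $n$ in a period is at most $V_\alpha(m_r)/r^{\alpha-1}\le c(\alpha)m_r(3\ln r/r)^{\alpha-1}$ by the bound $m_r/\phi(m_r)<3\ln r$; taking $\alpha=k+2$ gives Theorem \ref{thm:main} and hence the claim, with no moment computation of your own required.
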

The proof uses a theorem by Montgomery-Vaughan \cite{9}. 

This paper is organized as follows. In Section \ref{sec2} we give the necessary preliminaries. In Section \ref{sec3}
we prove Theorems \ref{thm:tt} and \ref{thm:r16}, in Section \ref{sec:main} we prove \ref{thm:dense}, and in
Section \ref{sec:tt} we discuss the original version of Theorem \ref{thm:tt} from \cite{10}.

\section{Preliminaries} \label{sec2}
Throughout this section, for a prime $p$ and an integer $u$ with $p \nmid u$, we shall let $\operatorname{ord}_{p}u$ denote the least positive integer $k$ such that $u^{k} \equiv 1 \pmod{p}$. 

\begin{lemma}\label{lem:fermat}
Suppose that $p$ is an odd prime dividing $n\in\mathbb{N}$ and $p\mid 2^n-1$.  If $a$ is the largest divisor of $n$ composed of primes smaller than $p$, we have $p\mid 2^a-1$.
\end{lemma}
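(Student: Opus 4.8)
The plan is to work with the multiplicative order of $2$ modulo $p$, which is well-defined precisely because $p$ is odd (so $p \nmid 2$). Set $d = \operatorname{ord}_p 2$. The whole argument reduces to locating $d$ as a divisor of $a$, after which the conclusion is immediate.

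First I would translate the hypothesis $p \mid 2^n - 1$ into $2^n \equiv 1 \pmod p$, which by the very definition of the order gives $d \mid n$. Next, Fermat's little theorem yields $2^{p-1} \equiv 1 \pmod p$, hence $d \mid p-1$; in particular $d \le p-1 < p$. Since $d$ is a positive integer strictly less than $p$, every prime factor of $d$ is itself smaller than $p$. Thus $d$ is a divisor of $n$ all of whose prime factors are smaller than $p$.

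The key step is then to invoke the maximality of $a$. By definition, $a$ is obtained from $n$ by discarding exactly the prime-power components attached to primes $\ge p$, so $a$ is the largest divisor of $n$ supported on primes below $p$; consequently any divisor of $n$ whose prime factors all lie below $p$ must divide $a$. Since $d$ is such a divisor, we conclude $d \mid a$. Finally, $d = \operatorname{ord}_p 2$ together with $d \mid a$ gives $2^a \equiv 1 \pmod p$, i.e.\ $p \mid 2^a - 1$, as desired.

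I expect the only point requiring care to be the observation that $d \mid p-1$ forces all prime factors of $d$ to lie below $p$; everything else is a routine chain of divisibilities built on the definition of the order. It is worth remarking that the hypothesis $p \mid n$ is not actually used to reach the conclusion: it merely guarantees, in the intended application, that $a$ is a \emph{proper} divisor of $n$, so that passing from $n$ to $a$ constitutes a genuine reduction.
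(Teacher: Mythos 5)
Your proof is correct and follows essentially the same route as the paper: both arguments set $d=\operatorname{ord}_p 2$, deduce $d\mid n$ and $d\mid p-1$ so that every prime factor of $d$ is below $p$, and conclude $d\mid a$. Your closing remark that the hypothesis $p\mid n$ is not needed for the conclusion is also accurate.
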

\begin{proof}
Since $\operatorname{ord}_p2\mid p-1$, all of the primes dividing $\operatorname{ord}_p2$ are smaller
than $p$.  If $p\mid 2^{n}-1$, then $\operatorname{ord}_p2\mid n$, so that $\operatorname{ord}_p2\mid a$.
\end{proof}

Let $\phi$ denote Euler's function from elementary number theory.

\begin{definition}
Let $q\in\mathbb{N}$ and let $\alpha \geq 1$ be a real number. Following \cite{9}, we define
\begin{center}
$V_{\alpha}(q)=\displaystyle \sum_{i=1}^{\phi(q)} (a_{i+1}-a_{i})^{\alpha}$
\end{center}
where $1=a_{1}<a_{2} \cdots < a_{\phi(q)+1}$ are the integers in $[1,q+1]$ that are coprime to $q$. 
\end{definition}
The following proposition is crucial for the proof of our main theorem.
\begin{proposition}{\rm(}\cite[Corollary 1]{9}{\rm)} \label{mont} Let $q\in\mathbb{N}$. For any fixed real number $\alpha \geq 1$, there is a positive number $c(\alpha)$ such that
\begin{center}
$V_{\alpha}(q) \le c(\alpha)\phi(q)(\phi(q)/q)^{-\alpha}$.
\end{center}
\end{proposition}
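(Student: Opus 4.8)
The plan is to pass from the $\alpha$-th moment of the gaps to a sum over the \emph{upper tail} of the gap distribution, and then to control that tail by a sieve estimate showing that gaps much larger than the mean are rare. Write $g_i=a_{i+1}-a_i$ for the $\phi(q)$ gaps between consecutive totatives in a period, and set $\rho=q/\phi(q)=\prod_{p\mid q}(1-1/p)^{-1}$, the mean gap; note $(\phi(q)/q)^{-\alpha}=\rho^\alpha$, so the target is $V_\alpha(q)\le c(\alpha)\phi(q)\rho^\alpha$. For $h\ge0$ let $T(h)$ denote the number of indices $i$ with $g_i>h$; equivalently $T(h)$ counts the totatives $n\pmod q$ for which none of $n+1,\dots,n+h$ is coprime to $q$. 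Thus $T(0)=\phi(q)$ and $\sum_{h\ge0}T(h)=\sum_i g_i=q$.

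First I would rewrite $V_\alpha$ in terms of $T$. Since $g_i^\alpha=\sum_{h=0}^{g_i-1}\bigl((h+1)^\alpha-h^\alpha\bigr)$, summing over $i$ and interchanging the order of summation gives $V_\alpha(q)=\sum_{h\ge0}\bigl((h+1)^\alpha-h^\alpha\bigr)T(h)$. For $\alpha\ge1$ the mean value theorem yields $(h+1)^\alpha-h^\alpha\le\alpha(h+1)^{\alpha-1}$, so $V_\alpha(q)\le\alpha\sum_{h\ge0}(h+1)^{\alpha-1}T(h)$. The whole problem is now reduced to a good upper bound on $T(h)$.

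The heart of the argument, and the step I expect to be hardest, is a decay estimate of the shape $T(h)\ll\phi(q)\,\psi(h/\rho)$ for a function $\psi$ decaying faster than any fixed power, say $\psi(t)\ll_\beta t^{-\beta}$ for every $\beta\ge1$ (ideally exponentially). Qualitatively this asserts that the chance a totative is followed by a run of more than $h$ non-totatives drops off rapidly once $h$ exceeds a fixed multiple of $\rho$. To prove it I would bound $T(h)$ by the number of residues $n$ coprime to $q$ for which the interval $(n,n+h]$ meets no totative: writing $r(n)$ for the number of totatives in $(n,n+h]$, we have $T(h)=\#\{n:\gcd(n,q)=1,\ r(n)=0\}$. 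A second-moment argument (Chebyshev's inequality) then bounds $\#\{r(n)=0\}$ in terms of the first and second moments $\sum_n r(n)$ and $\sum_n r(n)^2$, which count pairs and triples of nearby totatives; these are evaluated by the product $\prod_{p\mid q}(1-k_p/p)$ from the Eratosthenes--Legendre sieve, the error terms being controlled by the large sieve of \cite{9}. Pushing to higher moments of $r(n)$, i.e.\ counting $(k{+}1)$-tuples of totatives in short intervals, upgrades the power-law exponent $\beta$ past any fixed $\alpha$, which is all we require. The main obstacle is exactly this uniform tail control: obtaining decay strong enough that atypically long gaps are genuinely negligible, uniformly in $q$, rather than only in an average sense.

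Finally I would assemble the two pieces by splitting $\sum_{h\ge0}(h+1)^{\alpha-1}T(h)$ at $h\asymp\rho$. Using the trivial bound $T(h)\le\phi(q)$, the range $h\le\rho$ contributes $\ll\phi(q)\sum_{h\le\rho}(h+1)^{\alpha-1}\ll\phi(q)\rho^\alpha$, while for $h>\rho$ the tail estimate with a fixed $\beta>\alpha$ gives $\sum_{h>\rho}(h+1)^{\alpha-1}T(h)\ll\phi(q)\rho^\beta\sum_{h>\rho}h^{\alpha-1-\beta}\ll_\alpha\phi(q)\rho^\alpha$, the convergence of the last sum being guaranteed precisely by $\beta>\alpha$. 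Combining this with the reduction of the second paragraph yields $V_\alpha(q)\le c(\alpha)\phi(q)\rho^\alpha=c(\alpha)\phi(q)(\phi(q)/q)^{-\alpha}$, as claimed; the constant $c(\alpha)$ is permitted to depend on $\alpha$ (and on the chosen $\beta$), which is exactly what the statement allows.
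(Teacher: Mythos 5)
The paper does not prove this proposition at all: it is quoted verbatim as Corollary 1 of Montgomery--Vaughan \cite{9}, and the citation is the proof. So the relevant question is whether your sketch constitutes an independent proof, and it does not. Your reductions are fine and in fact mirror how Montgomery--Vaughan deduce their Corollary 1: the identity $V_\alpha(q)=\sum_{h\ge0}((h+1)^\alpha-h^\alpha)T(h)$, the splitting at $h\asymp\rho$, and the observation that a tail bound $T(h)\ll_\beta \phi(q)(h/\rho)^{-\beta}$ for some $\beta>\alpha$ finishes the job are all correct. But the tail bound itself is precisely the main content of \cite{9}, and your sketch of it is not a proof. The needed input is the moment estimate $M_k(q;h)=\sum_{n\le q}\bigl(r(n)-h\phi(q)/q\bigr)^{2k}\ll_k q\,(h\phi(q)/q)^k$, uniformly in $q$; this is Montgomery--Vaughan's Theorem 1, and it is deep. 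The naive expansion you propose produces main terms $\prod_{p\mid q}(1-k_p/p)$ over $2k$-tuples of shifts, but the error terms involve $2^{\omega(q)}$-type losses that an Eratosthenes--Legendre computation cannot control when $q$ has many small prime factors --- which is exactly the regime where $\phi(q)/q$ is small and the proposition has content. Controlling those errors uniformly in $q$ is the paper's central achievement, and your appeal to ``the large sieve of \cite{9}'' for this step is circular: \cite{9} is the very source whose corollary you are trying to prove.

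There is also a smaller quantitative slip in the assembly. Chebyshev applied to the $2k$-th moment bounds $\#\{n\bmod q: r(n)=0\}\ll q\,(h/\rho)^{-2k}$, where the count runs over \emph{all} residues $n$, so what you get directly is $T(h)\ll \phi(q)\,\rho\,(h/\rho)^{-2k}$, which after summing the tail yields $\phi(q)\rho^{\alpha+1}$ --- off from the target by a factor $\rho=q/\phi(q)$, which is unbounded over $q$. The standard repair is to count residues rather than gap endpoints: a gap $g_i>2h$ contributes at least $g_i-h>h$ residues $n$ with $r(n)=0$, so $\#\{i:g_i>2h\}\le \#\{n: r(n)=0\}/h$, and the division by $h$ recovers the lost factor. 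This step is absent from your write-up. In short: the outline correctly reproduces the easy deduction of the corollary from the moment theorem, but the moment theorem --- the actual substance --- is asserted rather than proved, so the attempt has an essential gap; for the purposes of this paper the correct ``proof'' is simply the citation to \cite{9}.
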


We have the following
inequality, which follows from \cite[(3.30)]{RS} and a short calculation:
\begin{equation}
    \label{eq:euler}
    \frac{n!}{\phi(n!)}<3\ln n,~~\hbox{ for all }n\ge2.
\end{equation}

Following \cite{8}, we define 
$$
S(r,n):=\sum_{k=0}^{n}\frac r{k+r}\binom nk.$$
It is clear that
\begin{equation}
    \label{eq:2pow}
S_r(n)+S(r,n)=\sum_{k=0}^n\binom nk=2^n,
\end{equation}
so that $S_r(n)$ is integral if and only if $S(r,n)$ is integral.

The following result is shown in \cite{8}.

\begin{lemma}  \label{lem29} 
We have
$$S(r,n)=\displaystyle \sum_{j=1}^{r} (-1)^{r-j} r \binom{r-1}{j-1} \frac{2^{n+j}-1}{n+j}.$$
\end{lemma}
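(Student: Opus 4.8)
The plan is to convert the sum $S(r,n)$ into a single integral and then expand. The starting observation is the elementary identity $\frac{1}{k+r}=\int_0^1 x^{k+r-1}\,dx$, valid for all $k\ge0$ and $r\ge1$. Substituting this into the definition $S(r,n)=\sum_{k=0}^n\frac{r}{k+r}\binom nk$ and interchanging the (finite) sum with the integral, I would obtain
\[
S(r,n)=r\int_0^1 x^{r-1}\sum_{k=0}^n\binom nk x^k\,dx=r\int_0^1 x^{r-1}(1+x)^n\,dx,
\]
where the inner sum collapses by the binomial theorem.

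Next I would rewrite the factor $x^{r-1}$ in terms of powers of $1+x$, since those are exactly what produce the shifted quantities $\frac{2^{n+j}-1}{n+j}$ upon integration. Writing $x=(1+x)-1$ and applying the binomial theorem gives $x^{r-1}=\sum_{i=0}^{r-1}(-1)^{r-1-i}\binom{r-1}{i}(1+x)^i$, so that
\[
S(r,n)=r\sum_{i=0}^{r-1}(-1)^{r-1-i}\binom{r-1}{i}\int_0^1(1+x)^{n+i}\,dx.
\]
Evaluating the elementary integral $\int_0^1(1+x)^{n+i}\,dx=\frac{2^{n+i+1}-1}{n+i+1}$ and then re-indexing with $j=i+1$ (so that $i=j-1$ and $j$ runs from $1$ to $r$) turns the exponent $r-1-i$ into $r-j$ and the binomial coefficient into $\binom{r-1}{j-1}$, yielding precisely the claimed formula.

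I do not expect a serious obstacle here: the argument is a clean sequence consisting of the Beta-type integral representation, a single binomial expansion, and a shift of the summation index. The only points demanding care are the bookkeeping of the signs, namely that $(-1)^{r-1-i}=(-1)^{r-j}$, and of the shifted index range after the substitution $j=i+1$; the interchange of the finite sum and the integral is immediate since the sum is finite. A purely algebraic alternative would replace both appearances of the integral by the identities $\frac{1}{k+r}=\int_0^1 x^{k+r-1}\,dx$ and $\frac{2^{m}-1}{m}=\int_0^1(1+x)^{m-1}\,dx$ carried out through finite-difference manipulations, but the integral route is the most transparent one, and it is the one I would adopt.
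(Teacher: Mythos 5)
Your proof is correct, and every step checks: the representation $\frac{1}{k+r}=\int_0^1 x^{k+r-1}\,dx$ (valid since $k+r\ge1$) collapses $S(r,n)$ to $r\int_0^1 x^{r-1}(1+x)^n\,dx$ after the harmless interchange of a finite sum with an integral; the expansion $x^{r-1}=\bigl((1+x)-1\bigr)^{r-1}=\sum_{i=0}^{r-1}(-1)^{r-1-i}\binom{r-1}{i}(1+x)^i$ is the binomial theorem; the evaluation $\int_0^1(1+x)^{n+i}\,dx=\frac{2^{n+i+1}-1}{n+i+1}$ is elementary; and the shift $j=i+1$ gives exactly the stated signs and binomial coefficients. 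For the comparison: the paper does not actually prove this lemma at all --- it states ``The following result is shown in \cite{8}'' and defers entirely to L\'opez-Aguayo and Luca, so your argument supplies a self-contained derivation where the paper offers only a citation. Your Beta-integral route buys transparency and brevity; an equivalent purely algebraic route (the finite-difference/partial-fraction version you mention at the end) amounts to clearing denominators, i.e.\ proving $(n+1)\cdots(n+r)\,S(r,n)=\sum_{i=1}^{r}(-1)^{r-i}r\binom{r-1}{i-1}(2^{n+i}-1)\prod_{j=1,\,j\ne i}^{r}(n+j)$, which is precisely the denominator-free form of the lemma that the paper itself uses later in the proof of Proposition \ref{prop:tt}; so your identity is also consistent with how the lemma is deployed downstream.
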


\section{The search to $r=22$} \label{sec3}

\begin{proposition}\label{prop:suff}
Let $n,r\in\mathbb{N}$ and suppose for some integer $j\in\{1,\dots,r\}$ we have $n+j=ab$ where
$a,b\in\mathbb{N}$, $b>1$, such that
\begin{itemize}
    \item each prime dividing $b$ is greater than $r$,
    \item and each prime dividing $2^a-1$ is at most $r$.
\end{itemize}    
Then $S(r,n)$ is nonintegral.
\end{proposition}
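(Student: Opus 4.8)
The goal is to show that under the stated divisibility hypotheses on $n+j=ab$, the rational $S(r,n)$ fails to be an integer. By \eqref{eq:2pow} this is equivalent to $S_r(n)\notin\mathbb{Z}$, and Lemma~\ref{lem29} gives the explicit expansion
$$S(r,n)=\sum_{j'=1}^{r}(-1)^{r-j'}r\binom{r-1}{j'-1}\frac{2^{n+j'}-1}{n+j'}.$$
The strategy I would pursue is $p$-adic: find a prime $p$ and show that $S(r,n)$ has negative $p$-adic valuation, i.e. $v_p(S(r,n))<0$, which forces nonintegrality. The natural candidate is a prime $p$ dividing $b$. Since every prime factor of $b$ exceeds $r$, such a $p$ is larger than $r$, hence $p\nmid r\binom{r-1}{j'-1}$ for every $j'$ (as those binomials and $r$ involve only factors up to $r$), and also $p\nmid(n+j')$ for $j'\neq j$ in the relevant range — this is where I will need to be careful.

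**Key steps.** First I would isolate the single term in the sum corresponding to our distinguished index $j$, namely $(-1)^{r-j}r\binom{r-1}{j-1}\frac{2^{n+j}-1}{n+j}$, and analyze its $p$-adic valuation for a prime $p\mid b$. Writing $p^e\,\|\,b$, we have $v_p(n+j)\ge e\ge1$, so the denominator $n+j$ contributes valuation $-v_p(n+j)$. The crucial point is the numerator: I must show $p\nmid 2^{n+j}-1=2^{ab}-1$, so that this term has strictly negative valuation. Suppose to the contrary that $p\mid 2^{ab}-1$. Applying Lemma~\ref{lem:fermat} with $n+j=ab$ in place of $n$: since $\operatorname{ord}_p 2\mid p-1$ and every prime factor of $\operatorname{ord}_p 2$ is smaller than $p$, and since all prime factors of $b$ are $\ge p>r$ while all prime factors of $2^a-1$ are $\le r$, I would argue that the largest divisor of $ab$ built from primes below $p$ is exactly $a$ (the $b$-part is stripped away). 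Then Lemma~\ref{lem:fermat} yields $p\mid 2^a-1$. But every prime dividing $2^a-1$ is at most $r<p$, a contradiction. Hence $p\nmid 2^{ab}-1$, and the distinguished term has $v_p<0$.

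**The main obstacle.** The real work is controlling the other terms $j'\neq j$: I must show none of them, nor their sum, cancels the negative valuation contributed by the $j$-term. For $j'\neq j$ with $1\le j'\le r$, the denominator is $n+j'=(n+j)+(j'-j)$, and since $|j'-j|<r<p$ while $p\mid n+j$, we get $p\nmid n+j'$, so $v_p\!\left(\frac{2^{n+j'}-1}{n+j'}\right)\ge 0$; combined with $p\nmid r\binom{r-1}{j'-1}$, every term with $j'\neq j$ is a $p$-adic integer. Therefore the full sum $S(r,n)$ equals a $p$-adic integer plus the single term of strictly negative valuation, giving $v_p(S(r,n))=v_p\!\left(r\binom{r-1}{j-1}\frac{2^{n+j}-1}{n+j}\right)<0$ and hence $S(r,n)\notin\mathbb{Z}$. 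The step I expect to be most delicate is the Lemma~\ref{lem:fermat} application: I must verify that $a$ is genuinely the full part of $ab$ supported on primes $<p$, which requires that $\gcd(a,b)$ contributes no prime $\ge p$ to the $b$-side and no surprises arise from primes equal to $p$ — but since $p\mid b$ and $p>r\ge$ every prime factor of $2^a-1$, the prime $p$ cannot divide $a$ either, so the factorization of primes below and above $p$ across $a$ and $b$ is clean.
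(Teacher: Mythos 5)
Your proposal is correct and follows essentially the same route as the paper's proof: isolate the term indexed by $j$ in Lemma~\ref{lem29}, use Lemma~\ref{lem:fermat} to rule out $p\mid 2^{n+j}-1$ for a prime $p\mid b$, and observe that $p>r$ prevents $p$ from appearing in any other denominator or in the coefficients $r\binom{r-1}{j'-1}$. The only point to make explicit is that $p$ should be taken to be the \emph{least} prime factor of $b$ (as the paper does), so that your assertion that all prime factors of $b$ are $\ge p$ — and hence that the part of $ab$ supported on primes below $p$ divides $a$ — is justified.
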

\begin{proof}
Let $p$ be the least prime factor of $b$, so that $p>r$.  Note that $p\mid n+j$, but $p$ does not divide any other member of
$\{n+1,\dots,n+r\}$.  Suppose that $p\mid 2^{n+j}-1$.  By Lemma \ref{lem:fermat}, $p\mid 2^a-1$.  But by
assumption, all prime factors of $2^a-1$ are at most $r$, a contradiction.  Thus, in lowest terms, the fraction $(2^{n+j}-1)/(n+j)$ has at
least one factor $p$ in the denominator.  The term corresponding to $j$ in Lemma \ref{lem29} is, up to sign,
$$r\binom{r-1}{j-1}\frac{2^{n+j}-1}{n+j}.$$
So, since $p>r$, we see that this term, when reduced to its lowest terms, has at least one factor $p$ in the denominator.
However, no other term in the sum in Lemma~\ref{lem29} has a factor $p$ in the denominator,
so that in the full sum $S(r,n)$, there is a factor $p$ in the denominator.  That is, $S(r,n)$
is not an integer.  This completes the proof.
\end{proof}
\begin{remark}
We note that for every integer $a\ge2$, there is a prime $p\equiv1\pmod a$ that divides $2^a-1$,
see \cite{B}.  This
implies that with $a,b$ as in Proposition \ref{prop:suff} we have $a\le r-1$, so that $a$ and $b$ are coprime.
\end{remark}

 \begin{proof}[Proof of Theorem \ref{thm:tt}]
 If $\{n+1,\dots,n+r\}$ contains some
 $n+i$ coprime to $m_r$, then we can apply Proposition \ref{prop:suff} to $n+i$ with 
$b=n+i$ and $a=1$.  On the other hand, if $\{n+1,\dots,n+r\}$ contains two even numbers less than 8 apart
 that are not divisible by any odd prime up to $r$,
 at least one of them is not divisible by 8, say it is $n+j$.  We apply Proposition \ref{prop:suff} 
 with $a=2$ or 4 and $b=(n+j)/a$.
   Since we may assume that $n\ge r>6$ from 
 \cite{8}, Proposition \ref{prop:suff} applies.  
 \end{proof}
Using Theorem \ref{thm:tt} and with some additional help from Proposition \ref{prop:suff}
 we can prove the conjecture for $r\le 22$.
 \begin{proof}[Proof of Theorem \ref{thm:r16}]
 We first handle the cases $r=11,12$.  The product of the primes to 11 is $m_{11}=2310$, so it suffices to show that
 every interval of 11 consecutive integers either contains a member coprime to 2310 or contains two even
 members less than 8 apart that are coprime to 1155.  Since the problem is symmetric about 1155, we only
 need to search to this level.  There are precisely 7 intervals of 11 consecutive integers in this range
 which do not contain a number coprime to 2310; these are the intervals starting at 
 \[
 2,114,115,116,200,468,510.\]
 We check that in each of them there are two even numbers less than 8 apart which are coprime to 1155.
 
 The calculation for $r=13,14,15,16$ is somewhat more extensive.  Here we show that every interval of
 13 consecutive integers contains either one that is coprime to $m_{13}=30030$ or contains two even members
 less than 8 apart that are coprime to 15015.  We need only check intervals whose first element is in
 $[1,15015]$.  All but 76 of them have a member coprime to 30030.  Each of these 76 intervals contains
 two even members less than 8 apart that are coprime to 15015.  
 
 This plan breaks down for $r=17,18$.  For example, when $n=60462$, the interval $[n,n+16]$ has each
 member with a nontrivial gcd with $m_{17}=510510$, so that condition (1)
 does not apply.  In addition, the only members of
 this set with no odd prime factors at most 17 are $n+2$ and $n+10$.  So, condition (2) does not apply either.
However, for $r\ge17$, we can strengthen (2) to
 \begin{itemize}
     \item[(\rm 2')] There exist $i,j \in \{1,\ldots,r\}$ such that $0<|i-j|<16$ and both $n+i$ and $n+j$ are even but not divisible by any odd prime up to $r$.
 \end{itemize}
 In addition, we have found it easier at higher levels to use Proposition \ref{prop:suff} directly for
 those intervals that do not have a member coprime to $m_r$.  When $r=17$, there are 498 such intervals $[n,n+16]$ 
 below 255255.  For each such interval $I$, we examine $I$ translated by $j\times510510$ for $j=0,1,2,3$, searching
 in each for a member of the form $ab$ where the primes in $b$ are greater
 than $r$, and with $a=2$ or~4.  All but two intervals had this property, namely the length 17 intervals
 starting at $n=60462$ and at $n=97590$ shifted
 by $3\times 510510$.  However, in these intervals, property (2') applies. 
 
 In continuing on to $r=19,20,21,22$ it turns out that conditions (1) and (2') are not sufficient for all
 cases.  We can supplement with a new condition which works for $r\ge13$:
 \begin{itemize}
     \item[\rm(3)] There exist $i,j\in\{1,\dots,r\}$ such that $9\nmid j-i$, both $n+i$ and $n+j$ are multiples
     of 3, they are not divisible by any prime in $[5,r]$, and they are not divisible by 8.
 \end{itemize}
 The sufficiency of condition (3) follows from Poposition \ref{prop:suff} with $a\mid 12$, using that the
 largest prime factor of $2^{12}-1$ is 13.
Condition (3) works well in conjunction with condition (2') since to apply them one needs to to translate the interval
 by $jm_{r}$, for $j=0,1,2,3$.  In examining length 19 intervals,
 all but 8439 of them satisfy condition (1).  Looking up to 4 times $m_{19}$,
 all but a handful of these $4\times8439$ intervals satisfy the hypothesis of Proposition \ref{prop:suff} with $a=2$ or 4.
 This handful is settled using conditions (2') and (3).
 This completes the proof.
 \end{proof}
 \begin{remark}
One possible route to proving that $S_r(n)$ is always nonintegral is to show that one of (1), (2) in
Theorem \ref{thm:tt} always occurs.  In fact, in the next section we show that condition (1) holds
when $r$ is large for most of the intervals $\{n+1,\dots,n+r\}$.  However, there are exceptional intervals where
(1) does not hold, and we have already seen that there can be intervals where neither (1) nor (2) hold.  In addition,
one can show that for all sufficiently large numbers $r$
there is some $n$ such that $\{n+1,\dots,n+r\}$ has each member with an odd prime factor at most $r$.
For example, a short argument shows this is the case for $r=103$.  So, replacing condition (2') with
higher powers of 2 does not always work either.    It
is conceivable that for every $r$ and every interval of $r$ consecutive integers at least $r$ there is a member for which
the hypothesis of Proposition \ref{prop:suff} holds, but we are not sure if this is so.  Complicating things,
one has for all sufficiently large numbers $r$ an interval of $r$ consecutive integers each divisible by
a prime $p$ in the range $\log_2r<p\le r$, see \cite[equation (3)]{P}.
\end{remark}

\section{Density}\label{sec:main}
In order to prove Theorem \ref{thm:dense}, we first show that condition (1) from Theorem \ref{thm:tt} usually holds.
\begin{theorem}\label{thm:main}
For each $k>0$ there exists a constant $c_{k}>0$, such that for each integer $r>1$, the asymptotic density of those integers $n$ such that $\{n+1,\ldots,n+r\}$ contains no number coprime to $m_r$ is at most $c_{k}/r^{k}$.
\end{theorem}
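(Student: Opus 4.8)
The plan is to exploit the periodicity of the coprimality condition, reduce the count of exceptional intervals to a sum over gaps between consecutive integers coprime to $m_r$, and then feed that sum into the Montgomery--Vaughan bound of Proposition \ref{mont}. Write $q=m_r$ and let $\cdots<b_0<b_1<b_2<\cdots$ be the increasing list of all integers coprime to $q$, with consecutive gaps $g_i=b_{i+1}-b_i$. The set of $n$ for which $\{n+1,\dots,n+r\}$ contains no integer coprime to $q$ is periodic modulo $q$, so its asymptotic density equals its relative count in one period $[1,q]$. An integer $n$ is bad exactly when the whole window $\{n+1,\dots,n+r\}$ falls strictly between two consecutive coprime integers $b_i<b_{i+1}$; a short calculation shows that each gap $g_i$ contributes precisely $\max(g_i-r,0)$ such values of $n$. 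Summing over the $\phi(q)$ gaps in a period, the density of bad $n$ is $\frac1q\sum_{g_i>r}(g_i-r)$.

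Next I would bound each contribution. For any fixed $\alpha\ge1$ and any gap with $g_i>r$, we have $g_i-r\le g_i\le g_i^{\alpha}/r^{\alpha-1}$, since $g_i/r>1$. Hence the density is at most $\frac{1}{q\,r^{\alpha-1}}\sum_i g_i^\alpha=\frac{V_\alpha(q)}{q\,r^{\alpha-1}}$, where the inner sum is recognized as $V_\alpha(q)$ from the definition above. Proposition \ref{mont} then gives $V_\alpha(q)\le c(\alpha)\phi(q)(q/\phi(q))^{\alpha}$, so the density is bounded by $\frac{c(\alpha)}{r^{\alpha-1}}\bigl(q/\phi(q)\bigr)^{\alpha-1}$.

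Finally I would control the factor $q/\phi(q)$. Since the primes dividing $m_r$ are exactly those dividing $r!$, we have $m_r/\phi(m_r)=r!/\phi(r!)$, so the inequality \eqref{eq:euler} yields $q/\phi(q)<3\ln r$ for $r\ge2$. This bounds the density by $c(\alpha)\,3^{\alpha-1}(\ln r/r)^{\alpha-1}$. Given $k>0$, choosing $\alpha=k+2$ rewrites this as $c(\alpha)\,3^{\alpha-1}\,r^{-k}\bigl((\ln r)^{k+1}/r\bigr)$, and since $(\ln r)^{k+1}/r$ is bounded for $r\ge2$ by a constant depending only on $k$, the whole expression is at most $c_k/r^k$ for a suitable $c_k$.

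The analytic heart of the argument is entirely supplied by Proposition \ref{mont}, so I do not expect a serious obstacle beyond bookkeeping. The two points requiring care are the combinatorial reduction in the first paragraph (verifying that the density is exactly $\frac1q\sum_{g_i>r}(g_i-r)$, including the correct count $g_i-r$ from a gap of size $g_i>r$) and the observation that the factorial inequality \eqref{eq:euler} applies verbatim to the primorial $m_r$ through the identity $m_r/\phi(m_r)=r!/\phi(r!)$. With these in hand, the choice $\alpha=k+2$ absorbs the logarithmic factor and delivers the stated $r^{-k}$ decay.
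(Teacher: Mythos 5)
Your proposal is correct and takes essentially the same route as the paper's proof: the periodicity reduction, the gap-counting bound $\sum_{g_i>r}(g_i-r)$, the passage to $V_\alpha(m_r)$ via $g_i\le g_i^\alpha/r^{\alpha-1}$, the Montgomery--Vaughan estimate of Proposition \ref{mont} combined with \eqref{eq:euler}, and the choice $\alpha=k+2$ all match the paper's argument. Your explicit remark that \eqref{eq:euler} transfers to the primorial through $m_r/\phi(m_r)=r!/\phi(r!)$ is a small but welcome clarification of a step the paper leaves implicit.
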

\begin{proof} Let $\alpha\ge1$ be a real number to be determined.  Recall that
\begin{center}
$V_{\alpha}(q)=\displaystyle \sum_{i=1}^{\phi(q)} (a_{i+1}-a_{i})^{\alpha}$
\end{center}
where $1=a_{1}<a_{2} \cdots < a_{\phi(q)+1}$ are the integers in $[1,q+1]$ that are coprime to $q$. By Proposition \ref{mont}, it follows that 
\begin{center}
$V_{\alpha}(q)<c(\alpha)\phi(q)(\phi(q)/q)^{-\alpha}=c(\alpha)q(q/\phi(q))^{\alpha-1}$ 
\end{center}
for some constant $c(\alpha)>0$. Applying this with $q=m_r$, together with \eqref{eq:euler}, yields
\begin{equation} \label{ineq1}
V_{\alpha}(q)<c(\alpha)q(3\operatorname{ln}r)^{\alpha-1}.
\end{equation}
Let $N:=\displaystyle \sum_{\left\{i: \ a_{i+1}-a_{i} \geq r\right\}} (a_{i+1}-a_{i})$. Then
\begin{align*}
Nr^{\alpha-1}=\displaystyle \sum_{\left\{i: \ a_{i+1}-a_{i} \geq r\right\}} (a_{i+1}-a_{i}) r^{\alpha-1} 
\leq \displaystyle \sum_{i=1}^{\phi(q)} (a_{i+1}-a_{i})^{\alpha} 
=V_{\alpha}(q),
\end{align*}
so that $N \leq V_{\alpha}(q)/r^{\alpha-1}$. Using inequality (\ref{ineq1}) we obtain 
\begin{equation} \label{ineq2}
N \leq c(\alpha) q \left(\frac{3\operatorname{ln}r}{r}\right)^{\alpha-1}.
\end{equation}

Now, we note that if an interval $I\subset[1,m_r]$ of integers does not contain any member of $\{a_1,\dots,a_{\phi(q)+1}\}$, then since it is an interval, it must lie completely between two consecutive members of this set. 
%Indeed: we have, $I \subseteq \displaystyle \bigcup_{i=1}^{\phi(q)} (a_{i},a_{i+1})$. Now let $x \in I$, then there exists $j \in [1,\phi(q)]$ such that $x \in (a_{j},a_{j+1})$. We claim that $I$ is contained in $(a_{j},a_{j+1})$. Let $z \in I$ and assume, for the sake of contradiction, that $z \not \in (a_{j},a_{j+1})$. If $z \leq a_{j}$, then $z \leq a_{j} < x$ and since $I$ is an interval then $a_{j} \in I$, a contradiction. The case $z \geq a_{j+1}$ can be dealt with similarly. 
So, if $\{n+1,\ldots,n+r\}$ contains no number coprime to $m_r=q$, there exists $w \in [1,\phi(q)]$ such that $(n,n+r+1) \subseteq (a_{w},a_{w+1})$. Then $n$ may be any of the numbers $a_w,a_w+1,\dots,a_{w+1}-r-1$,
that is the interval $(a_w,a_{w+1})$ gives rise to exactly $a_{w+1}-a_w-r$ intervals $\{n+1,\dots,n+r\}$.
%The latter interval corresponds to the numbers $n \in \displaystyle \bigcap_{i=1}^{r} (a_{w}-i,a_{w+1}-i)=(a_{w}-1,a_{w+1}-r)$ such that the set $\{n+1,\ldots,n+r\}$ is free of numbers coprime to $q$.
% Clearly, the number of integers in $(a_{w}-1,a_{w+1}-r)$ is $a_{w+1}-a_{w}-r$. 
Therefore
\begin{center}
$\#\{1 \leq n \leq q:  \gcd(n+i,q) \neq 1 \hbox{ for} \ i=1,\dots,r\} \leq \displaystyle \sum_{\{i: a_{i+1}-a_{i}>r\}} (a_{i+1}-a_{i}-r) \leq N$.
\end{center}

It remains to note that the integers $n$ where $\{n+1,\ldots,n+r\}$ has no element coprime to $m_r$ form a periodic set mod $m_r$.  That is, if $n$ has this property, so does every positive integer $m\equiv n\pmod{r!}$.
Hence by (\ref{ineq2}) the density of the set of such numbers is at most $c(\alpha)\left(\frac{3\operatorname{ln}r}{r}\right)^{\alpha-1}$. Then let $\alpha=k+2$, and the result follows
with $c_k$ the maximal value of $c(\alpha)(3\ln r)^{\alpha-1}/r$. 
\end{proof}

Theorem \ref{thm:dense} now follows as a corollary.
\begin{proof}[Proof of Theorem \ref{thm:dense}]
It follows from Theorem \ref{thm:tt} that if some member of $\{n+1,\dots,n+r\}$ is
coprime to $m_r$, then $S_r(n)$ is nonintegral.  Thus, the theorem follows immediately from Theorem \ref{thm:main}.
\end{proof}

\section{Thongjunthug's theorem}
\label{sec:tt}

In \cite{10}, the fourth author proved the following theorem.
\begin{theorem}
\label{thm:ttorig}
{\rm (\cite[Theorem 4.1]{10})} Given an integer $r \geq 5$, the sum $S_{r}(n)$ is not a positive integer for all $n \geq 2$ if the following two conditions hold:
\begin{enumerate}[\rm(a)]
\item For all $l \in \{1,2,\ldots, r\}$, we have
\begin{center}
$r(F_{r}(n)-(-1)^{r-1}(r-1)!) \equiv \pm r!(2^{n+l}-1) \pmod{(n+l)}$
\end{center}
where $F_{r}(n)=\displaystyle \sum_{i=0}^{r-1} \displaystyle \sum_{k=0}^{n+r} s(r,i+1) k^{i} \binom{n+r}{k} $, and $s(r,i+1)$ is the signed Stirling number of the first kind. 
\item Each integer $n \in \{0,1,\ldots,P-1\}$, where $P$ is the product of all primes up to $r$, satisfies at least one of the following: 
\begin{enumerate}
\item[\rm(b1)] There exists $i \in \{1, \ldots, r\}$ such that $p \nmid (n+i)$ for all primes $p \leq r$. 
\item[\rm(b2)] There exist $i,j \in \{1,\ldots,r\}$ such that $0<|i-j|<8$ and both $n+i$ and $n+j$ are even but not divisible by any odd prime up to $r$.
\end{enumerate}
\end{enumerate}
\end{theorem}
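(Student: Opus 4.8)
The plan is to recognize that condition (a) is not an independent hypothesis but an algebraic identity: I will show that $r\bigl(F_r(n)-(-1)^{r-1}(r-1)!\bigr)$ is exactly the integer numerator of $S(r,n)$ obtained by clearing denominators in Lemma \ref{lem29}, and that its reduction modulo $n+l$ is forced to be $(-1)^{r-1}r!(2^{n+l}-1)$. Granting this, the conclusion follows from condition (b), which is literally conditions (1),(2) of Theorem \ref{thm:tt} (with (b1)$=$(1), (b2)$=$(2)), via the prime-in-the-denominator argument of Proposition \ref{prop:suff}. By \eqref{eq:2pow} it suffices throughout to prove $S(r,n)\notin\mathbb{N}$ for $n\ge2$, and the range $2\le n\le r-1$ is already covered by \cite{8}, so I may assume $n\ge r$.

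The first and central step is to simplify $F_r(n)$. Interchanging the two sums and invoking the generating function of the signed Stirling numbers of the first kind, $\sum_{j=0}^{r}s(r,j)x^{j}=x(x-1)\cdots(x-r+1)$, the inner sum collapses: for each $k\ge1$,
\[
\sum_{i=0}^{r-1}s(r,i+1)k^{i}=\frac1k\sum_{j=1}^{r}s(r,j)k^{j}=(k-1)(k-2)\cdots(k-r+1),
\]
while the $k=0$ term equals $s(r,1)=(-1)^{r-1}(r-1)!$, exactly the quantity subtracted in (a). Each term with $1\le k\le r-1$ vanishes (one factor is zero), and re-indexing the rest by $k=m+r$ together with the identity $\binom{n+r}{m+r}\frac{(m+r-1)!}{m!}=\frac{(n+r)!}{n!}\,\frac{1}{m+r}\binom{n}{m}$ yields the key formula
\[
r\bigl(F_r(n)-(-1)^{r-1}(r-1)!\bigr)=(n+1)(n+2)\cdots(n+r)\,S(r,n).
\]
Writing $D=\prod_{j=1}^{r}(n+j)$, the right-hand side is the integer $T:=D\,S(r,n)=\sum_{j=1}^{r}(-1)^{r-j}r\binom{r-1}{j-1}(2^{n+j}-1)\prod_{l\ne j}(n+l)$, so $S(r,n)=T/D$.

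Next I would verify the congruence in (a) directly, which also shows (a) always holds (and hence explains why Theorem \ref{thm:tt} can drop it). Reducing $T$ modulo $n+l$ annihilates every term except $j=l$; replacing each $n+l'$ by $l'-l$ and simplifying the resulting product of binomial and factorial factors gives $T\equiv(-1)^{r-1}r!\,(2^{n+l}-1)\pmod{n+l}$, matching (a) with the plus sign. Finally I invoke condition (b): as in the proof of Theorem \ref{thm:tt}, condition (b) together with Proposition \ref{prop:suff}—applicable because $r\ge5$ guarantees that all prime factors of $2^{2}-1$ and $2^{4}-1$ are at most $r$—produces an index $l$ and a prime $p>r$ with $p\mid n+l$, $p\nmid 2^{n+l}-1$, and $p$ dividing no other member of $\{n+1,\dots,n+r\}$, so that $p\,\|\,D$. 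Reducing (a) modulo $p$ then gives $T\equiv\pm r!(2^{n+l}-1)\not\equiv0\pmod p$, since $p>r$ forces $p\nmid r!$ and $p\nmid 2^{n+l}-1$ by construction. Thus $p\mid D$ but $p\nmid T$, so $S(r,n)=T/D$ carries $p$ in its reduced denominator and is nonintegral, whence so is $S_r(n)$.

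The main obstacle is the bookkeeping in the two reductions: the Stirling-to-falling-factorial collapse with its boundary terms ($k=0$ and $1\le k\le r-1$) and the mod-$(n+l)$ simplification of $T$, where the signs and the $\pm$ must be tracked precisely so that the displayed identity for $T$ and the congruence in (a) come out exactly. Once these are in hand, the nonintegrality conclusion is identical to the one already established for Theorem \ref{thm:tt}.
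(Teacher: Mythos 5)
Your proposal is correct and follows essentially the same route as the paper: you reconstruct the identity $r\bigl(F_r(n)-(-1)^{r-1}(r-1)!\bigr)=(n+1)\cdots(n+r)S(r,n)$ and the resulting congruence exactly as in Proposition \ref{prop:tt} (the paper cites \cite[Lemma 3.2]{10} for the identity where you derive it via the Stirling generating function), and then conclude from condition (b) via Proposition \ref{prop:suff} just as in the proof of Theorem \ref{thm:tt}. The only nitpicks are cosmetic: the sign in the congruence is $(-1)^{r-1}$ rather than always $+$, and $p\,\Vert\,D$ should just read $p\mid D$ (a higher power of $p$ may divide $n+l$); neither affects the argument.
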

We have seen in Theorem \ref{thm:tt} that this result holds without condition (a), so that condition is
superfluous.  However, condition (a) is harmless, in that it always holds.  We now prove this assertion.
\begin{proposition}\label{prop:tt}
The condition (a) in Theorem \ref{thm:ttorig} holds for all $r,n\in\mathbb{N}$.
\end{proposition}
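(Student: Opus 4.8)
The plan is to reduce condition (a) to a purely combinatorial congruence by first producing a closed form for $F_r(n)$. First I would interchange the two summations defining $F_r(n)$, writing
\[
F_r(n)=\sum_{k=0}^{n+r}\binom{n+r}{k}\Big(\sum_{i=0}^{r-1}s(r,i+1)k^i\Big).
\]
The inner sum is the key simplification: since $s(r,0)=0$ and $\sum_{j=0}^r s(r,j)x^j=x(x-1)\cdots(x-r+1)$, multiplying the inner sum by $k$ recovers $k(k-1)\cdots(k-r+1)$, so dividing that polynomial identity by $k$ gives $\sum_{i=0}^{r-1}s(r,i+1)k^i=(k-1)(k-2)\cdots(k-r+1)$, the falling factorial $(k-1)_{r-1}$ (with the value $(-1)^{r-1}(r-1)!$ at $k=0$ under $0^0=1$). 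Expanding $(k-1)_{r-1}$ in the basis of falling factorials $(k)_j=k(k-1)\cdots(k-j+1)$ via the binomial theorem for falling factorials, and then using the standard evaluation $\sum_{k}\binom{m}{k}(k)_j=(m)_j2^{m-j}$ with $m=n+r$, I expect to obtain the closed form
\[
F_r(n)=(r-1)!\sum_{j=0}^{r-1}(-1)^{\,r-1-j}\binom{n+r}{j}2^{\,n+r-j}.
\]

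With this closed form in hand, I would reduce modulo $n+l$ for fixed $l\in\{1,\dots,r\}$. Writing $(r-1)!\binom{n+r}{j}=\frac{(r-1)!}{j!}(n+r)_j$, which is an integer since $j\le r-1$, the only $n$-dependence sits in $(n+r)_j=\prod_{t=0}^{j-1}(n+r-t)$. Since $n\equiv-l\pmod{n+l}$, each factor satisfies $n+r-t\equiv(r-l)-t$, so $(n+r)_j\equiv(r-l)_j\pmod{n+l}$; in particular every term with $j>r-l$ vanishes, because the product then contains the factor $n+l$. Setting $p=r-l$, factoring $2^{\,n+r-j}=2^{\,n+l}2^{\,p-j}$, and using $(p)_j/j!=\binom{p}{j}$, the surviving sum collapses by the binomial theorem:
\[
F_r(n)\equiv 2^{\,n+l}(r-1)!\sum_{j=0}^{p}(-1)^{\,r-1-j}\binom{p}{j}2^{\,p-j}=(-1)^{\,r-1}(r-1)!\,2^{\,n+l}\pmod{n+l},
\]
since $\sum_{j=0}^{p}\binom{p}{j}(-1)^{j}2^{\,p-j}=(2-1)^{p}=1$.

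Finally I would conclude: multiplying by $r$ and subtracting $r(-1)^{r-1}(r-1)!=(-1)^{r-1}r!$ gives
\[
r\big(F_r(n)-(-1)^{r-1}(r-1)!\big)\equiv(-1)^{\,r-1}r!\,(2^{\,n+l}-1)\pmod{n+l},
\]
which is exactly the asserted congruence with the sign $(-1)^{r-1}$; as this holds for every $l\in\{1,\dots,r\}$ and every $n$, condition (a) follows. I expect the main obstacle to be bookkeeping rather than any deep difficulty: establishing the Stirling-number simplification of the inner sum cleanly (including the $k=0$ term), and justifying the reduction of $(n+r)_j$ as an honest integer congruence valid for \emph{all} $n$ (the closed form is an exact identity, so no genericity or lower bound on $n$ is needed). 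The one place to be careful is arranging that the single sign $(-1)^{r-1}$ comes out consistently in both the leading $2^{\,n+l}$ term and the constant term.
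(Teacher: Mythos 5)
Your proof is correct, but it takes a genuinely different route from the one in the paper. The paper's argument never touches the Stirling-number definition of $F_r(n)$ directly: it invokes Lemma 3.2 of \cite{10}, which says $r\bigl(F_r(n)-(-1)^{r-1}(r-1)!\bigr)=(n+1)\cdots(n+r)S(r,n)$, combines this with the partial-fraction expansion of $S(r,n)$ from Lemma \ref{lem29}, and then reduces modulo $n+i$; all terms except the $i$-th contain the factor $n+i$, and the surviving term collapses via $\prod_{j\ne i}(j-i)=\pm(r-i)!(i-1)!$ to $\pm r!(2^{n+i}-1)$. You instead derive a self-contained closed form $F_r(n)=(r-1)!\sum_{j=0}^{r-1}(-1)^{r-1-j}\binom{n+r}{j}2^{n+r-j}$ from the generating identity $\sum_j s(r,j)x^j=(x)_r$, the Chu--Vandermonde expansion of $(k-1)_{r-1}$, and the evaluation $\sum_k\binom{m}{k}(k)_j=(m)_j2^{m-j}$, and then reduce that closed form modulo $n+l$, where the binomial theorem $(2-1)^{r-l}=1$ does the collapsing. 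I checked your intermediate identities (including the $k=0$ term, the integrality of $(r-1)!/j!$ for $j\le r-1$, and the vanishing of the terms with $j>r-l$) and they are all sound; the closed form also agrees numerically with small cases. What each approach buys: the paper's proof is shorter but leans on an external lemma from \cite{10} and on the machinery already set up for $S(r,n)$; yours is longer but entirely self-contained, gives an explicit closed form for $F_r(n)$ that may be of independent use, and pins the ambiguous sign $\pm$ in condition (a) down to $(-1)^{r-1}$ --- which, one can check, is also the sign the paper's computation produces if one tracks $\prod_{j\ne i}(j-i)=(-1)^{i-1}(i-1)!(r-i)!$ carefully.
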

\begin{proof}
Using \cite[Lemma 3.2]{10} and \eqref{eq:2pow} one has 
\begin{align*}
r(F_{r}(n)-(-1)^{r-1}(r-1)!)&=(n+1)\cdots(n+r)S(r,n)\\
&=\sum_{i=1}^{r} (-1)^{r-i} r \binom{r-1}{i-1} (2^{n+i}-1)\prod_{j=1,j\ne i}^r(n+j).
\end{align*}
Thus, for each $i=1,\dots,r$ we have
\begin{equation}
    \label{eq:ident}
r(F_{r}(n)-(-1)^{r-1}(r-1)!)\equiv (-1)^{r-i} r \binom{r-1}{i-1} (2^{n+i}-1)\prod_{j=1,j\ne i}^r(n+j)\pmod{n+i}.
\end{equation}
Since $n+j\equiv j-i\pmod{n+i}$, we have
\[
\prod_{j=1,j\ne i}^r(n+j)\equiv \prod_{j=1,j\ne i}^r(j-i)=\pm (r-i)!(i-1)!\pmod{n+i}.
\]
Multiplying this by $r\binom{r-1}{i-1}$ one gets $\pm r!$, and substituting into \eqref{eq:ident} gives
\[
r(F_{r}(n)-(-1)^{r-1}(r-1)!)\equiv \pm r!(2^{n+i}-1)\pmod{n+i},
\]
which completes the proof.
\end{proof}

\end{document}